\pgfplotsset{compat=newest}
\tikzset{%
    treenode/.style = {draw=none, fill=none, align=center, minimum size=.05cm},
    subtree/.style = {
      isosceles triangle,
      draw=black,
      align=center,
      minimum height=0.5cm,
      minimum width=1cm,
      shape border rotate=90,
      anchor=north}
}
\newtheorem{theorem}{Theorem}[section]
\newtheorem{lemma}[theorem]{Lemma}
\newtheorem{definition}[theorem]{Definition}
\newtheorem{example}[theorem]{Example}
\newcommand\len[1]{\left|#1\right|}
\begin{document}

\title{A Self-Referential Property of Zimin Words}
\author{John Connor\\The Graduate Center, City University of New York}

\date{}

\maketitle
\thispagestyle{empty}

\begin{abstract}
    This paper gives a short overview of Zimin words, and proves an
    interesting property of their distribution.
    Let $L_q^m$ to be the lexically ordered sequence of $q$-ary words of
    length $m$,
    and let $T_n(L_q^m)$ to be the binary sequence where the $i$-th term is
    $1$ if and only if the $i$-th word of $L_q^m$ encounters the $n$-th
    Zimin word, $Z_n$.
    We show that the sequence $T_n(L_q^m)$ is an instance of
    $Z_{n+1}$ when $1 < n$ and $m=2^n-1$.
\end{abstract}

\section{Introduction}
In this section we will introduce some basic definitions and give an
informal overview of our research.
The section ends with an informal statement of our main result,
while the rest of the paper develops the necessary framework to formalize
and prove the theorem.
We conclude with a few conjectures and open problems.

Throughout this paper we use $q, n, m$ to denote natural numbers with $1 < q$,
and we define the alphabet $\Sigma$ to be the set of the first $q$ natural numbers.
This gives us a certain flexibility in treating words as numbers represented in base $q$,
but all of our results can of course be made completely general.
For convenience, we define $Q$ to be the word of length $1$ consisting of the symbol $q-1$.

\begin{definition}[Zimin Words]
    We define the Zimin Words as an infinite set of finite words recursively
    constructed over the natural numbers
    \begin{equation}
        \begin{aligned}
            Z_0     &= 0 \\
            Z_{n+1} &= Z_{n} (n + 1) Z_{n}
        \end{aligned}
    \end{equation}

    \begin{example}
        The first four Zimin words are
        \begin{align*}
            Z_0 &= 0 \\
            Z_1 &= Z_0 1 Z_0 = 010 \\
            Z_2 &= Z_1 2 Z_1 = 0102010 \\
            Z_3 &= Z_2 3 Z_2 = 010201030102010
        \end{align*}
    \end{example}
\end{definition}

\begin{definition}[Instance]
    A word $W$ over $\Sigma$ is an instance of a word $V$ over $\Sigma'$ if
    and only if there is a homomorphism $\phi \in \hom(\Sigma', \Sigma^+)$
    such that $\phi(V) = W$.  Note that the codomain of $\phi$ does not
    contain the empty word; morphisms with this constraint are sometimes
    called ``non-erasing morphisms''.
    \begin{example}
        For example if $W=ABCD$ and $V=12$ then $W$ is a $V$-instance under the homomorphism
        \begin{align*}
            1 \mapsto AB \\
            2 \mapsto CD
        \end{align*}
    \end{example}
\end{definition}

\begin{definition}[Subword Relation]
    A word $V$ is a subword of $W$ if and only if $V$ appears contiguously in $W$.
    That is $V \leq W \iff W = XVY$, and $V$, $X$, $Y$ may all be the empty word.
\end{definition}

\begin{definition}[Encounter]
A word $W$ encounters $V$ provided some subword $U \leq W$ is an instance of $V$.
If a word $W$ fails to encounter $V$ then $W$ is said to avoid $V$.
\end{definition}

\begin{definition}[Unavoidable]
    A word $W$ is unavoidable if for any finite alphabet there are only
    finitely many words which avoid $W$ over the alphabet.
\end{definition}

Zimin~\cite{zimin1984} gives the following characterization of unavoidable words
\begin{theorem}
    A word $W$ with $n$ distinct letters is unavoidable if and only if
    $Z_n$ encounters $W$.
\end{theorem}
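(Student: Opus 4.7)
The plan is to split the biconditional into its two directions, linked through the following reduction operation on $W$: if some letter appears exactly once in $W$, delete all its occurrences, and repeat. The key intermediate equivalence is that $Z_n$ encounters $W$ (where $W$ has $n$ distinct letters) if and only if iterating this reduction brings $W$ to the empty word. The non-obvious half of this equivalence is proved by inductively constructing a homomorphism $\psi$ with $\psi(W) \leq Z_n$ that mirrors, in reverse, the sequence of deletions.

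For the direction \enquote{$Z_n$ encounters $W \Rightarrow W$ unavoidable}, first I would prove the stronger statement that $Z_n$ itself is unavoidable over every finite alphabet. Composition of homomorphisms then closes the argument: if $\psi(W) \leq Z_n$ and a word $X$ contains $\phi(Z_n)$ as a subword, then $\phi(\psi(W))$ is a subword of $X$ and an instance of $W$ under $\phi \circ \psi$. Unavoidability of $Z_n$ is proved by induction on $n$. The base case $Z_0 = 0$ is immediate. For the inductive step over an alphabet of size $q$, take a word so long that the inductive hypothesis locates very many $Z_n$-instances. Since there are only finitely many possible homomorphic \emph{types} of a $Z_n$-instance (bounded by a function of $q$ and $n$) and only $q$ choices for the letter immediately following such an instance, pigeonhole produces two $Z_n$-instances of the same type separated by a common letter $a$; the concatenation of the left instance, the letter $a$, and the right instance realises $Z_{n+1} = Z_n\,(n+1)\,Z_n$.

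For the direction \enquote{$W$ unavoidable $\Rightarrow Z_n$ encounters $W$}, I would argue the contrapositive via the reduction. If the reduction terminates at the empty word then $Z_n$ encounters $W$ by the intermediate equivalence. If instead it halts at a nonempty word $W'$ in which every remaining letter appears at least twice, I would construct arbitrarily long $W$-avoiding words by iterating a carefully chosen non-erasing morphism: the double-occurrence condition on $W'$ forces any prospective $W'$-instance (and hence $W$-instance) to impose repetition constraints that can be ruled out combinatorially, producing an infinite $W$-avoiding family and contradicting unavoidability.

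The main obstacle is the inductive step for unavoidability of $Z_n$: formulating the correct notion of \emph{type} of a $Z_n$-instance and proving the quantitative pigeonhole bound is delicate, and it is this step that pushes the threshold length $N(n,q)$ for forced appearance of $Z_n$ into tower-of-twos territory. The reverse direction is conceptually cleaner once the reduction equivalence is established, but exhibiting the explicit $W$-avoiding morphism in the stuck case is the technical heart of that half of the argument.
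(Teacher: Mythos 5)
First, a point of reference: the paper does not prove this theorem at all --- it states Zimin's characterization and defers to the literature (Zimin's original article, Lothaire, and Sapir). So your proposal has to be judged against the classical proof rather than anything in this paper. Your overall architecture is indeed the classical one: prove that $Z_n$ itself is unavoidable by a pigeonhole argument on ``types'' of $Z_n$-instances and compose homomorphisms for one direction, and use a letter-deletion reduction on $W$ for the other. The unavoidability half is sketched correctly in outline.

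The genuine gap is your ``key intermediate equivalence.'' The reduction you define --- delete a letter only when it occurs exactly once --- is not the right operation, and the claimed equivalence with ``$Z_n$ encounters $W$'' is false. Take $W = 010 = Z_1$ itself, which has $n = 2$ distinct letters. The letter $1$ occurs exactly once, so your reduction deletes it and halts at $00$, a nonempty word in which every remaining letter occurs at least twice. Your contrapositive argument would then manufacture arbitrarily long words avoiding $010$; but $010$ is unavoidable (it is $Z_1$, and $Z_2 = 0102010$ encounters it via the identity map). The same failure occurs for every $Z_n$ with $n \geq 1$: after deleting the unique highest letter, all remaining letters occur at least twice and your reduction is stuck, so your equivalence would declare the Zimin words themselves avoidable. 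The correct notion (Bean--Ehrenfeucht--McNulty, and Zimin) is deletion of a \emph{free} letter or free set, defined via the bipartite adjacency graph of $W$: an edge joins the left copy of $x$ to the right copy of $y$ whenever $xy$ is a factor of $W$, and $x$ is free when its left and right copies lie in different connected components. Under that definition $0$ is free in $010$, and deleting it first gives $1$, then the empty word. Freeness, not multiplicity of occurrence, is what permits the inductive construction of $\psi$ with $\psi(W) \leq Z_n$, and the ``stuck'' case that yields avoidability is ``no nonempty free set exists,'' not ``every letter occurs at least twice.'' As written, both directions of your argument that pass through the reduction would prove false statements, so this step needs to be replaced wholesale; the homomorphism-composition direction survives intact.
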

\noindent
The paper is in Russian,
but there is a proof of this theorem in~\cite{lothaire2002algebraic}
and in~\cite{sapir2014}.

It can be illustrative to label the vertices of a $q$-ary tree with
$q$-ary words, such that the root is labeled with the empty string,
and the word labeling the vertex connected to its parent by the $i$-th
branch is the label of the parent vertex with the symbol $i$ prepended.

The binary tree in Figure~\ref{fig:bintree:0} is constructed by pruning all
vertices which are children of a vertex labeled with a word encountering
$Z_n$.
From the figure, we can see that when $q = 2$,
every word of length less than $3$ avoids $Z_2$,
while there exists only two words of length $4$ which avoid $Z_2$.
\begin{figure}[!ht]
\centering
\begin{tikzpicture}[->,>=stealth', level/.style={sibling distance = 8cm/#1, level distance = 1.5cm}, scale=0.6,transform shape]
    \node [treenode] {$\epsilon$}
    child [sibling distance = 10cm] {%
        node [treenode] {$0$}
        child {%
            node [treenode] {$00$}
            child {node [treenode] {$000$}}
            child {%
                node [treenode] {$100$}
                child {node [treenode] {$0100$}}
                child {%
                    node [treenode] {$1100$}
                    child {node [treenode] {$01100$}}
                    child {node [treenode] {$11100$}}
                }
            }
        }
        child {%
            node [treenode] {$10$}
            child {node [treenode] {$010$}}
            child {%
                node [treenode] {$110$}
                child {node [treenode] {$0110$}}
                child {node [treenode] {$1110$}}
            }
        }
    }
    child [sibling distance = 10cm] {%
        node [treenode] {$1$}
        child {%
            node [treenode] {$01$}
            child {%
                node [treenode] {$001$}
                child {node [treenode] {$0001$}}
                child {node [treenode] {$1001$}}
            }
            child {node [treenode] {$101$}}
        }
        child {%
            node [treenode] {$11$}
            child {%
                node [treenode] {$011$}
                child {%
                    node [treenode] {$0011$}
                    child {node [treenode] {$00011$}}
                    child {node [treenode] {$10011$}}
                }
                child {node [treenode] {$1011$}}
            }
            child {node [treenode] {$111$}}
        }
    }
;
\end{tikzpicture}
\caption{}\label{fig:bintree:0}
\end{figure}

In the next section we will consider general homomorphisms, but for now we
will only concern ourselves with homomorphisms such that
$\left|\phi(i)\right| = 1$ for all $i$.
When $q=2$, we can easily see that there are only $2$ words which encounter
$Z_1$: $0$ and $1$.
We can also easily see that all of the $2$-ary $Z_2$ instances are of the
form $\phi(0)\phi(1)\phi(0)$.
In general we will see that it is always the case that a $Z_{n+1}$ instance
is of the form $\phi(Z_{n})\phi(n+1)\phi(Z_{n})$.
We say that $\phi(Z_n)$ generates $\phi(Z_{n})\phi(n+1)\phi(Z_{n})$,
and that $\phi(Z_{n})\phi(n+1)\phi(Z_{n})$ is generated by $\phi(Z_n)$.

Now consider all of the $2$-ary words of length $2^3-1$ which are $Z_3$
instances.  In Figure~\ref{fig:zinstree} we emphasize the recursive nature
of their construction by drawing the words as a binary tree with arrows
extending from a word to the words it generates.

\begin{figure}[!ht]
\centering
\begin{tikzpicture}[->,>=stealth', level/.style={sibling distance = 8cm/#1, level distance = 1.5cm}, scale=0.6,transform shape]
    \node [treenode] {$\epsilon$}
    child
    {%
        node [treenode] {$0$}
        child
        {%
            node [treenode] {$000$}
            child
            {%
                node [treenode] {$\phantom{000}0000000$}
            }
            child
            {%
                node [treenode] {$0001000\phantom{000}$}
            }
        }
        child
        {%
            node [treenode] {$010$}
            child
            {%
                node [treenode] {$\phantom{000}0100010$}
            }
            child
            {%
                node [treenode] {$0101010\phantom{000}$}
            }
        }
    }
    child
    {%
        node [treenode] {$1$}
        child
        {%
            node [treenode] {$101$}
            child
            {%
                node [treenode] {$\phantom{000}1010101$}
            }
            child
            {%
                node [treenode] {$1011101\phantom{000}$}
            }
        }
        child
        {%
            node [treenode] {$111$}
            child
            {%
                node [treenode] {$\phantom{000}1110111$}
            }
            child
            {%
                node [treenode] {$1111111\phantom{000}$}
            }
        }
    }
;
\end{tikzpicture}
\caption{}\label{fig:zinstree}
\end{figure}
\noindent
Now if we consider the lexical ordering of the $2$-ary words of length $3$,
we see (by inspection) that the lexical distances between the $Z_2$
instances are symmetric, except for the distance between $011$ and $100$.
That is, the lexical distance between $000$ and $010$ is equal to the
lexical distance between $101$ and $111$, and so on.  If we take the
ordered sequence of words of length $3$
\begin{equation*}
    000, 001, 010, 011, 100, 101, 110, 111
\end{equation*}
and transform it into a $2$-ary word such that the $i$-th symbol is $1$ if
and only if the $i$-th word in the sequence encounters $Z_2$, then we
construct the word $10100101$, which is a $Z_3$ instance.
Our main result is that words constructed in this way are always $Z_{n+1}$
instances.

\section{Definitions}
In this section we introduce some new definitions which we will use in the
statement and proof of our theorem.
In the remainder of the paper we use $L_q^m$ to denote the sequence of all
$q$-ary words of length $m$ over $\Sigma$ in lexical order,
and we use $\phi$ as an arbitrary homomorphism from $\mathbb{N}$ to
$\Sigma^+$.

\begin{definition}[Density]
    If a word $W$ encounters a word $V$ in $x$ different ways,
    we say that the \textit{density} of $V$ in $W$ is $x$.
    We write this as $\rho_V(W) = x$.

    \begin{example}
        For example if $W = 0110$ then
        \begin{align*}
            \rho_{0}(W) &= 10 & \rho_{01}(W) &= 4 & \rho_{010}(W) &= 1 \\
        \end{align*}
    \end{example}
    \noindent
    The maximum density of $Z_n$ in a $q$-ary word of length $m$ is
    denoted as $P(q, n, m)$ and calculated as the number of subwords of
    length at least $2^n-1$.  In symbols
    \begin{equation}
        \sum_{i=2^n-1}^m m - i = \frac{1}{2}(m-2^n+1)(m-2^n+2)
    \end{equation}
    When the context is clear, we will simply use $P$ without arguments.
\end{definition}

\begin{definition}[Index Function]
    If $W \in L_q^m$ then $\Delta_q^m(W)$
    is the index of $W$ in $L_q^m$.
    When $m, q$ are apparent from the context,
    we will write this as $\Delta(W)$.

    \begin{example}
        \begin{equation*}
            L_2^3 = 000, 001, 010, 011, 100, 101, 110, 111
        \end{equation*}
        \begin{align*}
            \Delta(000) &= 0 & \Delta(001) &= 1 & \Delta(010) &= 2 \\
            \Delta(011) &= 3 & \Delta(100) &= 4 & \Delta(101) &= 5 \\
            \Delta(110) &= 6 & \Delta(111) &= 7 \\
        \end{align*}
    \end{example}
\end{definition}

\begin{definition}[Truth Table]
    We define the \textit{truth table} $T_n(L_q^m) = t_1 \dots t_{q^m}$
    as the $2$-ary word of length $q^m$ such that $t_i = 1$
    if and only if the $i$-th word of $L_q^m$ is a $Z_n$ instance.

    \begin{example}
        \begin{align*}
            L_2^3      &= 000, 001, 010, 011, 100, 101, 110, 111 \\
            T_2(L_2^3) &= 10100101
        \end{align*}
    \end{example}
\end{definition}

\begin{definition}[Density Table]
    The \textit{density table} is a generalization of the truth table,
    where instead of indicating the presence of an encounter,
    we count the number of encounters.

    Let $W_i$ be the $i$-th word in $L_q^m$.
    We define the density table $D_n(L_q^m) = d_1 \dots d_{q^m}$ as the
    $P$-ary word of length $q^m$ such that $d_i$ = $\rho_{Z_n}(W_i)$.

    \begin{example}
        When $m = 2^n-1$, the truth table and density table are identical.
        \begin{align*}
            L_2^3      &= 000, 001, 010, 011, 100, 101, 110, 111 \\
            D_2(L_2^3) &= T_2(L_2^3) = 10100101.
        \end{align*}
        However, as $m$ increases the density table grows more complex
        \begin{align*}
            L_2^4      &=               0000, 0001, 0010, 0011, \\
                       &\phantom{{}={}} 0100, 0101, 0110, 0111, \\
                       &\phantom{{}={}} 1000, 1001, 1010, 1011, \\
                       &\phantom{{}={}} 1100, 1101, 1110, 1111  \\
            T_2(L_2^4) &= 1110111111110111 \\
            D_2(L_2^4) &= 3 1 2 0 2 3 1 1 1 1 3 2 0 2 1 3 \\ 
        \end{align*}

    \end{example}
    In Appendix 1 there are graphical depictions of density tables for small $n, m, q$.
\end{definition}

\begin{definition}[Neighbor]\label{zadj}
    If $U$ and $V$ are $Z_n$ instances of length $m$ such that $U \neq V$ and there does
    not exist a $Z_n$ instance $Y$ of length $m$ such that $\Delta(U) < \Delta(Y) < \Delta(V)$ or $\Delta(V) < \Delta(Y) < \Delta(U)$
    then $U$ and $V$ are said to be neighbors.
\end{definition}

\section{Properties of Truth Tables}\label{sec:ptt}
When we began this work, our goal was to investigate the structure of the density tables.
However, their structure turns out to be very difficult to describe.
As a first step towards their description, we will prove the following theorem.
\begin{theorem}\label{thm:zinst}
    $T_n(L_q^m)$ is a $2$-ary $Z_{n+1}$ instance if $m = 2^n-1$ and $1 < n$.
\end{theorem}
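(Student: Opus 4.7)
The plan is to induct on $n \ge 2$, leveraging the recursive structure $Z_n = Z_{n-1}\cdot c\cdot Z_{n-1}$ (for the fresh letter $c$ introduced at level $n$). My starting observation is a normal form for $Z_n$-instances of length exactly $2^n-1$: any such $W$ must be realized by a homomorphism with $|\phi(i)|=1$ on every letter of $Z_n$, so the recursion forces $W = U\cdot x\cdot U$ with $U$ a $Z_{n-1}$-instance of length $2^{n-1}-1$ and $x\in\Sigma$ arbitrary. Converting to lex indices gives $\Delta(W) = (q^{2^{n-1}}+1)\Delta(U) + q^{2^{n-1}-1} x$, so $T_n(L_q^m)$ splits cleanly into $C := q^{2^{n-1}-1}$ consecutive blocks of length $qC$; the $a$-th block will be nonzero exactly when $T_{n-1}(L_q^{m'})[a]=1$ (with $m'=2^{n-1}-1$), and when nonzero it equals $q$ concatenated copies of the length-$C$ unit indicator at position $a$.

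A second ingredient I will use is palindromicity: the complementing involution $W[i]\mapsto(q-1)-W[i]$ sends $\Delta(W)$ to $q^m-1-\Delta(W)$ and preserves $Z_n$-instances, so $T_n(L_q^m)$ reads the same forwards and backwards. For the base case $n=2$ I will verify $T_2(L_q^3)$ is a $Z_3$-instance by defining $\phi(0)=1$, $\phi(1)=0^{q-1}$, and $\phi(2) = T_2(L_q^3)[q+1,\dots,q^3-q-2]$, then matching this against the explicit concatenation $T_2(L_q^3) = e_0^{(q)}\cdot e_1^{(q)}\cdots e_{q-1}^{(q)}$ implied by the block description. For the inductive step I will strengthen the hypothesis to ``$T_{n-1}(L_q^{m'}) = \psi(Z_n)$ with all images of $\psi$ palindromic,'' and transport the factorization upward: the nested decomposition $\psi(Z_n) = \psi(Z_{n-1})\cdot \psi(c)\cdot \psi(Z_{n-1})$ locates the nonzero blocks of $T_n(L_q^m)$ in a palindromic pattern, and combined with the unit-indicator shape of each nonzero block this lets me define $\phi_n$ letter-by-letter by ``stretching'' each $\psi$-image together with its flanking zero-blocks, while taking $\phi_n$ on the fresh letter to be the central segment of $T_n(L_q^m)$.

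The main obstacle I expect is the bookkeeping when $q>2$: for $q=2$ we have $q^n = 2^n$ ones in $T_n(L_q^m)$, matching the number of $0$-positions of $Z_{n+1}$, so every $1$ is an image of $\phi_n(0)$ and each $\phi_n(k)$ with $k\ge 1$ is simply a zero-run of a prescribed length (the gap between the appropriate pair of consecutive $Z_n$-instance indices); but when $q\ge 3$, since $q^n>2^n$, many $1$'s of $T_n$ must end up buried inside $\phi_n(k)$ for $k\ge 2$, and the definition of $\phi_n(k)$ has to absorb the full $e_a^{(q)}$ pattern of interior nonzero blocks rather than a plain zero-run. Verifying that the resulting $\phi_n$ is well-defined and actually satisfies $\phi_n(Z_{n+1}) = T_n(L_q^m)$ will require careful segment-by-segment length and content matching, and maintaining the palindromicity of the $\psi$-images as an induction invariant is what makes it possible to carve $T_n(L_q^m)$ symmetrically about its center.
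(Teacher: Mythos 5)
Your structural groundwork is sound, and in places more careful than the paper's own: the normal form $W=UxU$ for instances of length exactly $2^n-1$, the index formula $\Delta(UxU)=(q^{2^{n-1}}+1)\Delta(U)+q^{2^{n-1}-1}x$, the resulting description of $T_n(L_q^m)$ in terms of the $1$-positions of $T_{n-1}(L_q^{m'})$, and the palindromicity of the truth tables are all correct (the first two are essentially Lemma~\ref{thm:zcnt} and Lemma~\ref{thm:zdist}). Your base case also works, including for $q\ge 3$, where the image of the top letter must absorb interior $1$'s.

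The inductive step, however, has a genuine gap, and it is the step that carries all the weight. Write $C=q^{2^{n-1}-1}$, so $|T_{n-1}|=C$ and $|T_n|=qC^2$, i.e.\ each position of $T_{n-1}$ accounts for $qC$ positions of $T_n$. If, as you describe, $\phi_n(\ell)$ is the stretch of $\psi(\ell)$ together with its flanking zero-blocks for \emph{every} letter $\ell$ of $Z_n$, then summing over the letter occurrences of $Z_n$ gives $|\phi_n(Z_n)|\ge qC\cdot|\psi(Z_n)|=qC\cdot|T_{n-1}|=|T_n|$, so $\phi_n(Z_{n+1})=\phi_n(Z_n)\,\phi_n(n{+}1)\,\phi_n(Z_n)$ is more than twice as long as $T_n$; the recipe cannot output $T_n$. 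The underlying problem is a level mismatch: pushing the factorization $T_{n-1}=\psi(Z_n)$ through a letter-by-letter stretching homomorphism only certifies that $T_n$ is a $Z_n$-instance, whereas you must produce a $Z_{n+1}$-instance. The extra level of nesting has to be manufactured, and the only place it can come from is \emph{inside} the clusters: each single $1$ of $T_{n-1}$ blows up into $1(0^{C-1}1)^{q-1}$ in $T_n$, and it is this internal $q$-fold structure that supplies the new bottom layer of the $Z_{n+1}$-pattern, with every layer of $T_{n-1}$'s structure shifted up by one letter. This is exactly the role of the paper's assignment $\phi_n(0)=1(0^{\cdot}1)^{q-1}$ (base letter of $Z_{n+1}\mapsto$ whole cluster, higher letters $\mapsto$ inter-cluster gaps $0^{\Psi_n(i)}$), and your plan never articulates it. Two secondary issues: the blocks $B_a=(e_a^{(q)})^q$ depend on the absolute position $a$ and not merely on $t_a$, so any cuts must be aligned to clusters rather than to block boundaries for a letter image to be well defined across its many occurrences; and for $q\ge 3$ the difficulty you correctly flag --- deciding which interior clusters land inside $\phi_n(k)$ for $k\ge 2$ --- is not controlled by the induction hypothesis as you have strengthened it (palindromicity of the $\psi$-images alone), so the segment-by-segment matching you defer to is precisely the part of the argument that is still missing.
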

\noindent
Our proof will be by induction, and we will make use of three lemmas.
From Lemma~\ref{thm:zcnt} we will know the number of $Z_n$ instances of
length $2^n-1$,
from Lemma~\ref{thm:zdist} we will know how these instances are distributed
in $L_{q}^{m}$, and from Lemma~\ref{thm:zplus1} we will see how the structure
of the density table for words of length $m$ can be used to infer some
structure of the density table for words of length $m+1$.
We will then combine these results to prove the theorem.


\begin{lemma}\label{thm:zcnt}
    There are $q^n$ words of length $2^n-1$ which are $Z_n$ instances.
    \begin{proof}
        Consider the base case of $Z_1$.
        There are $q$ words of length $1$ which are $Z_1$ instances:
        $0, 1, \dots, Q$.

        Assume that this is true for all $n \leq k$.
        By definition, a $Z_{k+1}$ instance is of the form $\phi(Z_k) q_i \phi(Z_k)$ for some $\phi$ and some $q_i \in \Sigma$.
        By the induction hypothesis, the number of $Z_k$ instances of length $2^{k-1}-1$ is $q^{k-1}$.
        Therefore there are $q^{k-1}$ ways to chose $\phi(Z_{k})$,
        and clearly there are $q$ ways to choose $q_i$,
        which gives us $q(q^{k-1}) = q^{k}$ words of length $2^k-1$ which encounter $Z_k$.
    \end{proof}
\end{lemma}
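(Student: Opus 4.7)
My approach is induction on $n$, following the recursive structure $Z_{n+1} = Z_n (n+1) Z_n$.

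For the base case $n = 1$, a word of length $2^1 - 1 = 1$ is a single symbol, and every symbol in $\Sigma$ is a $Z_1$ instance under an appropriate single-symbol morphism, giving exactly $q = q^1$ such instances.

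For the inductive step, I would decompose an arbitrary $Z_{n+1}$ instance $W$ of length $2^{n+1}-1$. By definition there exists a non-erasing morphism $\phi$ with $W = \phi(Z_n)\phi(n+1)\phi(Z_n)$. Since every $Z_n$ instance has length at least $2^n - 1$ (the minimum being realized by the instances the inductive hypothesis counts) and $|\phi(n+1)| \geq 1$, the length constraint forces $|\phi(Z_n)| = 2^n - 1$ and $|\phi(n+1)| = 1$. Hence $W = UcU$ with $U$ a $Z_n$ instance of length $2^n - 1$ and $c \in \Sigma$. Conversely, any such pair $(U, c)$ yields a $Z_{n+1}$ instance by extending the morphism realizing $U$ with $\phi(n+1) = c$, and the decomposition is unique because $W$ has odd length and hence a unique midpoint. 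Multiplying the $q^n$ choices for $U$ coming from the inductive hypothesis by the $q$ choices for $c$ gives $q^{n+1}$, as claimed.

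The main subtle point I would take care with is the claim that the shortest $Z_n$ instance has length exactly $2^n - 1$; this is what forces the middle image to be a single symbol and the flanking halves to have the exact length handled by the inductive hypothesis. Once this length bound is secured, the bijection between $Z_{n+1}$ instances and pairs $(U, c)$ is immediate and the count multiplies cleanly to close the induction.
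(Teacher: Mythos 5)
Your proof is correct and follows essentially the same induction as the paper: decompose a minimal-length $Z_{n+1}$ instance as $UcU$ and multiply the $q^n$ choices of $U$ by the $q$ choices of the middle symbol $c$. You are in fact more careful than the paper's own argument, which asserts the product count directly without justifying that the length constraint forces $\left|\phi(Z_n)\right| = 2^n-1$ and $\left|\phi(n+1)\right| = 1$, or that distinct pairs $(U,c)$ yield distinct words --- both points your writeup supplies.
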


\begin{definition}[Generate]
    If $U$ is a $Z_n$ instance of length $2^n-1$ we say the set of
    $Z_{n+1}$ instances of the form $U q_i U$ is \textit{generated} by $U$.
    From Lemma~\ref{thm:zcnt} we see that each instance generates a set of
    size $q$.
\end{definition}

\begin{lemma}\label{thm:zdist}
    Let $m=2^n-1$, and let $q_i$ range over $\Sigma$.
    If $U, V$ are neighboring $Z_n$ instances of length $m$ such that $U < V$
    then $UQU$ and $V0V$ are neighboring $Z_{n+1}$ instances, and
    \begin{align*}
        \Delta(U q_{i+1} U) - \Delta(U q_{i} U) &= m \\
        \Delta(V0V) - \Delta(UQU) &= q^{m+1}\Delta(V) + \Delta(V) - (q^{m+1}\Delta(U) + q^{m} + \Delta(U))
    \end{align*}

    \begin{proof}
        Consider the base case of $Z_1$.
        There are $q$ words of length $1$ which are $Z_1$ instances:
        $0, 1, \dots, Q$.
        If $U, V$ are two neighboring $Z_1$ instances such that $U < V$ then $\Delta(V) - \Delta(U) = 1$
        and by inspection we see that the two conditions hold
        \begin{align*}
            \Delta(U q_{i+1} U) - \Delta(U q_{i} U) &= m = 1 \\
            \Delta(V0V) - \Delta(UQU) &= q^2 \Delta(V) + \Delta(V) - (q^2\Delta(U) + q + \Delta(U))  \\
                                      &= q^2 (\Delta(V) - \Delta(U)) + (\Delta(V) - \Delta(U)) - q   \\
                                      &= q^2 - q + 1
        \end{align*}
        Assume that this is true for all $n \leq k$.
        Let $m=2^{k}-1, m^{\prime} = 2^{k+1}-1$, and let $U, V$ be
        neighboring $Z_{k}$ instances with $U < V$.
        As before, $U q_{i} U$ and $U q_{i+1} U$ are clearly neighboring
        $Z_{k+1}$ instances.
        There must therefore be $m$ words avoiding $Z_n$ since $\Delta(U q_{i} U) + q^m = \Delta(U q_{i+1} U)$.
        Furthermore, there can be no $Z_{k+1}$ instance of size $m^{\prime}$ which starts with any of the words between $U$ and $V$,
        since $U, V$ are neighbors and the $Z_{k+1}$ instances of length $m^{\prime}$ are necessarily of the form $\phi(Z_{k}) q_{i} \phi(Z_k)$.
        Therefore the index of $UQU$ in $L_q^{m^{\prime}}$ is $q^{m^{\prime}} \Delta(U) + q^{m} \Delta(Q) + \Delta(U)$,
        and the index of $V0V$ in $L_q^{m^{\prime}}$ is $q^{m^{\prime}} \Delta(V) + \Delta(V)$, and so the condition holds.
    \end{proof}

    \begin{example}
        The truth of this proof is most readily seen by viewing the words as natural numbers in base $q$ (with numerically meaningless leading zeros).
        Consider the following example for $q=2, n=2$ and $m=4$.
        \begin{alignat*}{4}
            &\Delta(010)     &&= 2 \\
            &\Delta(0100000) &&= 2^{5}             &&= 2^{4}\Delta(010) \\
            &\Delta(0100010) &&= 2^{5} + 2         &&= 2^{4}\Delta(010) + \Delta(010) \\
            &\Delta(0101010) &&= 2^{5} + 2^{3} + 2 &&= 2^{4}\Delta(010) + 2^{3} + \Delta(010) \\
        \end{alignat*}
    \end{example}
\end{lemma}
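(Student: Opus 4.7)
My plan is to prove both claims simultaneously by induction on $n$, leaning on the fact that lexical order on $L_q^{\ell}$ coincides with numerical order when words are read as base-$q$ numerals; in particular the index function satisfies $\Delta(AB) = q^{|B|}\Delta(A) + \Delta(B)$ under concatenation. This reduces every index computation to arithmetic on the numerical values $\Delta(U)$ and $\Delta(V)$.

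For the base case $n=1$, the $Z_1$ instances of length $1$ are the $q$ symbols $0,1,\dots,Q$, so $\Delta(V)-\Delta(U)=1$ for neighbors, and the two identities reduce to direct calculation. For the inductive step, write $m=2^n-1$ and $m'=2m+1=2^{n+1}-1$. The first structural ingredient is that a $Z_{n+1}$ instance of length exactly $m'$ must factor as $X a X$ with $X$ a $Z_n$ instance of length $m$ and $a\in\Sigma$ a single symbol; combined with Lemma~\ref{thm:zcnt}, this reduces the task of locating $Z_{n+1}$ instances in $L_q^{m'}$ to locating $Z_n$ instances in $L_q^m$ together with a choice of middle symbol.

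Given neighboring instances $U<V$ in $L_q^m$, the concatenation identity yields $\Delta(U a U) = q^{m+1}\Delta(U) + q^m\Delta(a) + \Delta(U)$, so the $q$ words $U0U,\,U1U,\,\dots,\,UQU$ form an arithmetic progression with common difference $q^m$ inside the $L_q^{m'}$-block of words starting with $U$. To promote this to the neighbor claim it suffices to show that no $Z_{n+1}$ instance of length $m'$ has length-$m$ prefix $X$ with $\Delta(U)<\Delta(X)<\Delta(V)$, which by the factorization above reduces to the hypothesis that $U$ and $V$ are neighbors among $Z_n$ instances in $L_q^m$. Hence the $Z_{n+1}$ instance immediately following $UQU$ is $V0V$, and the second identity follows by substituting the concatenation formula into $\Delta(V0V)-\Delta(UQU)$.

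The main obstacle I anticipate is the factorization step: a priori a $Z_{n+1}$ instance is $\phi(Z_n)\,\phi(n+1)\,\phi(Z_n)$ for some non-erasing $\phi$, and one must rule out the possibility that $\phi(n+1)$ is longer than a single symbol. This is handled by the length count $2\,|\phi(Z_n)| + |\phi(n+1)| = m'$ together with the minimal-length bound $|\phi(Z_n)|\ge 2^n-1=m$ (since any $Z_n$ instance has length at least $|Z_n|$), forcing $|\phi(Z_n)|=m$ and $|\phi(n+1)|=1$. Once this is in hand, the remainder is bookkeeping on base-$q$ digit expansions.
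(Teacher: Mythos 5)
Your proposal is correct and follows essentially the same route as the paper's own proof: induction on $n$ with the base case checked by inspection, the observation that a $Z_{n+1}$ instance of length exactly $2^{n+1}-1$ must factor as $XaX$ with $X$ a minimal-length $Z_n$ instance and $a$ a single symbol, and base-$q$ positional arithmetic via $\Delta(AB)=q^{|B|}\Delta(A)+\Delta(B)$ to compute the indices and transfer the neighbor property. The only substantive difference is that you explicitly justify $|\phi(n+1)|=1$ by the length-minimality count, a step the paper asserts without argument, and your common difference $q^{m}$ (rather than the $m$ appearing in the lemma's displayed statement) agrees with what the paper's own proof and its use in Theorem~\ref{thm:zinst2} actually establish.
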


\begin{definition}
    The arithmetic in the following proofs is greatly simplified by making
    use of a utility function $\Psi_n$ which encodes the second condition
    of Lemma~\ref{thm:zdist}.

    Let $U,V$ be neighboring $Z_{n-1}$ instances such that $V$ is the
    $i$-th $Z_{n-1}$ instance in $L_q^{2^{n-1}-1}$ and $U<V$.
    \begin{equation*}
        \Psi_n(i) = \Delta(V0V) - \Delta(UQU)
    \end{equation*}
    \begin{example}
        \begin{align*}
            n &= 2 \\
            q &= 3 \\
            U &= 010 \\
            V &= 020 \\
            \Psi_n(1) &= \Delta(0200020) - \Delta(0102010)
        \end{align*}
    \end{example}
\end{definition}

\begin{lemma}\label{thm:zplus1}
    Let $D_n = D_n(L_q^m) = d_0 \dots d_{q^m}$ and
    $D_n' = D_n(L_q^{m+1}) = d_0' \dots d_{q^{m+1}}'$.
    For all natural numbers $i, r, x$ with $i < q^m, r \leq m$ we have
    \begin{align*}
        d_i = x \implies d_i \leq d_{i q^r}' \\
    \end{align*}
\end{lemma}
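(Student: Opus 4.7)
The plan is to reduce the inequality to two elementary facts: a base-$q$ identification of the word sitting at index $iq^r$ in the longer table, and the monotonicity of $Z_n$-density under appending symbols on the right.

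First I would establish the indexing identity. Since $\Sigma = \{0, 1, \dots, q-1\}$ and lexical order on fixed-length $q$-ary words coincides with ordinary numerical order when a word is read as a base-$q$ numeral (leftmost symbol most significant), multiplying the numerical value by $q^r$ corresponds to appending $r$ trailing zeros to the word. Consequently, if $W_i$ is the $i$-th word of $L_q^m$, then the word at index $iq^r$ in the longer density table is exactly $W_i 0^r$, and so $d_{iq^r}' = \rho_{Z_n}(W_i 0^r)$.

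Next I would prove the monotonicity observation: for any words $W$ and $S$,
\begin{equation*}
\rho_{Z_n}(W) \leq \rho_{Z_n}(W S).
\end{equation*}
This is immediate from the definition of density, because every subword of $W$ occurring at position $k$ is also a subword of $WS$ occurring at position $k$, and distinct occurrences of $Z_n$-instances in $W$ remain distinct in $WS$. Specializing $S = 0^r$ and combining with the indexing identity yields
\begin{equation*}
d_i \;=\; \rho_{Z_n}(W_i) \;\leq\; \rho_{Z_n}(W_i 0^r) \;=\; d_{iq^r}',
\end{equation*}
which is exactly the lemma.

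The principal obstacle is really only the bookkeeping: one must translate carefully between lexical indices and base-$q$ representations in order to pin down ``the word at index $iq^r$'' as $W_i 0^r$, and then verify that the hypothesis $i < q^m$, $r \leq m$ is what is needed to keep $iq^r$ inside the indexing range of the longer table so that $d_{iq^r}'$ is actually defined. Once this arithmetic is laid out, the density inequality itself is essentially a definitional consequence of the trivial subword containment $W_i \leq W_i 0^r$, with no further combinatorial input required.
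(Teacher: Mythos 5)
Your proof takes essentially the same route as the paper's one-line argument, which rests on the single observation that appending letters to a word cannot destroy any encounter of a Zimin word; you merely make explicit the base-$q$ identification of the word at index $iq^r$ that the paper leaves implicit. (One caveat: $W_i 0^r$ has length $m+r$, so your indexing identity literally requires the longer table to be $L_q^{m+r}$ rather than $L_q^{m+1}$ when $r>1$ --- but that mismatch is already present in the lemma's own statement and is not a defect your argument introduces.)
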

\begin{proof}
    The proof of this is trivial, and follows from the facts that when a
    word $U$ encounters a Zimin word, so does $UV$ and $VU$ for any word
    $V$.
\end{proof}

We will now formally state and prove our main result.
\begin{theorem}\label{thm:zinst2}
    The truth table $T_n(L_q^m)$ is a $Z_{n+1}$ instance if $1<n$ and $2^n-1=m$.
    Specifically, ${\phi_n(Z_{n+1}) = T_n(L_q^m)}$ where
    \begin{equation}
        \phi_n(i) =
        \begin{cases}
            1{(0^{q^{2^{n-1}-1}}1)}^{q-1}      & i = 0 \\
            0^{\Psi_n(i)}                      & 0 < i < n \\
        \end{cases}
    \end{equation}
\end{theorem}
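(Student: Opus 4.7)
The plan is to induct on $n$. For the base case $n=2$, I would verify $T_2(L_q^3) = \phi_2(Z_3)$ by direct computation: the $Z_2$-instances of length $3$ are exactly the words of the form $aba$ with $a,b \in \Sigma$, giving $q^2$ ones arranged by Lemma~\ref{thm:zdist} into $q$ contiguous blocks of $q$ equally-spaced ones, and this positional pattern can be matched against the explicit expansion of $\phi_2(Z_3)$.

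For the inductive step, assume $T_n(L_q^{2^n-1}) = \phi_n(Z_{n+1})$ and aim to prove $T_{n+1}(L_q^{2^{n+1}-1}) = \phi_{n+1}(Z_{n+2})$. The main structural fact, from Lemma~\ref{thm:zdist}, is that the $Z_{n+1}$-instances of length $2^{n+1}-1$ are exactly the words $U q_k U$ for $Z_n$-instances $U$ of length $2^n-1$ and $k \in \Sigma$. For each such $U$, the $q$ generated instances occur at arithmetic-progression positions in $L_q^{2^{n+1}-1}$ with common difference $q^{2^n-1}$; between neighboring $Z_n$-instances $U<V$ with $V$ the $j$-th such instance, the distance from $\Delta(UQU)$ to $\Delta(V0V)$ equals $\Psi_{n+1}(j)$. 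So $T_{n+1}$ decomposes as $q^n$ identical internal blocks separated by $0$-runs whose lengths are the $\Psi_{n+1}$-values applied in the order the $Z_n$-instances appear lexicographically.

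The linking insight is that this left-to-right sequence of inter-block gaps in $T_{n+1}$ is controlled by the positions of $Z_n$-instances inside $L_q^{2^n-1}$, which is precisely the truth table $T_n(L_q^{2^n-1})$. By the inductive hypothesis, $T_n = \phi_n(Z_{n+1})$, so the sequence of inter-block gaps in $T_{n+1}$ inherits the Zimin structure of $Z_{n+1}$ one level up. I would then match this recursive gap-sequence to the non-$0$ letters in the decomposition $Z_{n+2} = Z_{n+1}\,(n+1)\,Z_{n+1}$ under $\phi_{n+1}$: the clauses $\phi_{n+1}(i) = 0^{\Psi_{n+1}(i)}$ for $0 < i < n+1$ align the internal gap-runs with the corresponding inter-block distances, $\phi_{n+1}(0)$ realizes the repeating block pattern, and the higher cases (truncated in the excerpt) account for the central gap at the letter $n+1$ and the symmetric gaps between the two halves of $L_q^{2^{n+1}-1}$.

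The main obstacle will be the careful position-arithmetic needed to verify that the length of one block together with the adjacent gap equals the $\Delta$-increment predicted by Lemma~\ref{thm:zdist}, and to show that the correspondence between the inter-block gap sequence and the image of $Z_{n+1}$ under $\phi_n$ is exact rather than only qualitative. Lemma~\ref{thm:zplus1} plays a supporting role here, letting me transport positivity information about density tables from length $m$ to $m+1$ so the step from words of length $2^n-1$ to words of length $2^{n+1}-1$ can be handled recursively without computing intermediate density tables at every length along the way.
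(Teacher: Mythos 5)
Your proposal follows essentially the same route as the paper's own proof: induction on $n$, with the base case checked by direct computation, Lemma~\ref{thm:zdist} supplying both the within-block spacing $q^{2^{n-1}-1}$ and the inter-block gaps via $\Psi$, and the inductive hypothesis transferring the Zimin structure of $T_n$ to the gap sequence of $T_{n+1}$. The only notable differences are cosmetic: you anchor the induction at $n=2$ where the paper (somewhat confusingly) labels its base case $n=3$, and you correctly flag that the displayed definition of $\phi_n$ omits the cases $i \geq n$ needed for the central letter of $Z_{n+1}$ --- a gap the paper's own proof also leaves implicit.
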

\begin{proof}
    Consider the base case of $n = 3$.
    It is easy to verify the direct application of Lemmas~\ref{thm:zcnt}
    and Lemma~\ref{thm:zdist} that
    $\phi_3(Z_3) = T_2 = 1{(0^{q-1}1)}^{q-1}0^{q}1{(0^{q-1}1)}^{q-1}$,
    and that the sequence is a $Z_3$ instance.
    Assume that this holds for all $n < k$.
    For ease of notation, let $m = 2^{k-1}-1, m' = 2^{k}-1$,
    and ${T = T_{k-1}(L_q^{m}) = t_0 \dots t_{q^{m}}}$,
    ${T' = T_{k}(L_q^{m'}) = t_0' \dots t_{q^{m'}}'}$.


    We must show that $\phi_{k}(Z_{k+1}) = T'$.
    By Lemma~\ref{thm:zcnt} there are $q^{k-1}$ instances of $Z_{k-1}$ in $L_q^m$,
    and each $Z_{k-1}$ instance generates $q$ instances of $Z_{k}$ in $L_q^{m'}$,
    each of the form $\phi(Z_{k-1})q_i\phi(Z_{k-1})$.
    Let $U$ be such a $Z_{k-1}$ instance.
    By the first condition of Lemma~\ref{thm:zdist},
    $\Delta(U q_{i+1} U) - \Delta(U q_i U) = q^{i+1}-q^{i} = q^{\len{U}} = q^{2^{k-1}-1}$,
    from which it follows that for every pair of neighboring instances in $L_q^{m'}$ there are
    a pair of ``neighboring'' occurrences of the symbol $1$ in $T_{k}$ with $q^m$
    occurrences of the symbol $0$ between them.
    Furthermore, $\phi_{k}^{-1}(\Delta^{-1}(U q_i U)) = 0$,
    since $\phi_{k-1}(\Delta^{-1}(U)) = 0$ by the inductive hypothesis.

    It remains to show that the number of occurrences of the symbol $0$
    separating the occurrences of the word $\phi_{k}(0)$ in $T'$ is
    given by $\phi_{k}(i)$ for $0 < i \leq k$.
    Let $U, V$ be neighboring $Z_{k-1}$ instances such that $U < V$.
    It follows that $UQU, V0V$ are neighboring $Z_{k}$ instances.
    Specifically, $UQU$ corresponds to the symbol $1$ with the greatest
    index in the subword of $T_k$ corresponding to the $q$ words generated
    by $U$, and $V0V$ corresponds to the least index of the symbol $1$ in
    the subword of $T_k$ corresponding to the $q$ words generated by $V$.
    Recall that we denote $\Delta(V0V) - \Delta(UQU)$ as the image of
    $\Delta^{-1}(V)$ under $\Psi_k$.
    It follows that the number of occurrences of the symbol $0$ in $T_{k}$
    between the subword generated by $U$ and the subword generated by $V$
    is the image of $\Delta^{-1}(V)$ under $\Psi_k$.
    From this, it follows that $\phi_{k}(Z_{k+1}) = T'$.
\end{proof}

\section{Continuing Work and Connections to Number Theory}
The proof of Theorem~\ref{thm:zinst2} can be extended to show that
$T(L_q^{m})$ is a $Z_{n+2}$ instance when $m=2^n$.
However, this pattern does not generally hold for larger $m$,
instead the structure of the truth table becomes more and more complicated
as $m$ continues to grow, until suddenly ``collapsing'' into a word consisting
only of the symbol $1$.

In Appendix 1 we give graphical depictions of the density tables for small
$n, m, q$.
A glance at the images is sufficient to see that this result just barely
scratches the surface of the density table structure.
We believe that better understanding the structure of the density tables would
help a great deal in improving the bounds on Zimin word avoidance,
as introduced in~\cite{cooper2014} and independently in~\cite{rytter2015}.
Furthermore, we believe that this structure would best be explored through a
number theoretic framework.
Our work in this area has just begun,
but a few examples of our findings are sketched below.

\subsection{Connection To Integer Partitions}
\begin{theorem}
    For all natural numbers $r$ such that $q^{f(n,q)} \leq r$, there exists
    natural numbers $u,v,x,y$ such that some partition of $r$ has at most
    $5$ parts such that
    \begin{equation*}
        r = xq^{\log(v+2u+y+1)} +
            uq^{\log(v+u+y+1)} +
            vq^{\log(u + y)} +
            uq^{\log(y)} + y
    \end{equation*}
    where $v, x, y$ may be $0$, all logarithms are taken base $q$ and
    $\Delta^{-1}(u)$ is a $Z_{n-1}$ instance.
    (And where we abuse notation slightly to define $\log(0) = 0$.)
\end{theorem}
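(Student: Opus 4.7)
The plan is to interpret the five-term expression as a base-$q$ positional decomposition of $r$, matching it to the index formula for Zimin instances from Lemma~\ref{thm:zdist}. There the index of a $Z_n$ instance $UqU$ in $L_q^{2^n-1}$ expands as $q^{\len{U}+1}\Delta(U) + q^{\len{U}}\Delta(q_i) + \Delta(U)$, so the exponents of $q$ literally record the positions of digit blocks in a base-$q$ representation. Reading $\log$ as the integer-valued position function that picks out these block positions, I would interpret the summands $uq^{\log(v+u+y+1)}$ and $uq^{\log(y)}$ as the two symmetric occurrences of the Zimin-indexed block $u$, the central summand $vq^{\log(u+y)}$ as the separating symbol, and the outer summands $xq^{\log(v+2u+y+1)}$ and $y$ as the low and high slack outside the Zimin core.

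First I would fix $f(n,q)$. The natural candidate, given the rest of the paper, is $f(n,q) = 2^{n-1}-1$, so the hypothesis $q^{f(n,q)} \leq r$ guarantees that the base-$q$ expansion of $r$ has at least enough digits to accommodate a $Z_{n-1}$-length block on each side of a central symbol. Next, I would perform a greedy extraction: peel $y$ off as the lowest $\log_q(u+y)$ digits of $r$, peel $x$ off the top, identify the central digit as a candidate $v$, and read the two symmetric middle blocks as a candidate $u$. By Lemma~\ref{thm:zcnt} there are $q^{n-1}$ valid values of $u$ with $\Delta^{-1}(u)$ a $Z_{n-1}$ instance; the distribution of these values in $L_q^{2^{n-1}-1}$ is controlled by $\Psi_n$ via Lemma~\ref{thm:zdist}. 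I would use that spacing bound to perturb the extracted $u$ to the nearest admissible neighbor and absorb the discrepancy into $x$ and $y$.

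The main obstacle is precisely this last adjustment. Since only $q^{n-1}$ of the roughly $q^{2^{n-1}-1}$ possible block values are Zimin indices, the density of admissible $u$ is vanishingly small, and the correction required to round a generic extracted block to a valid Zimin index can be as large as the maximum value of $\Psi_n$ across neighboring pairs. The key estimate I would need is an upper bound on $\max_i \Psi_n(i)$ in terms of $q$ and $n$, together with a lower bound on the available slack in $x$ and $y$ once $r \ge q^{f(n,q)}$. I expect the right value of $f(n,q)$ to be dictated by making these two bounds compatible, and finding the sharp form is likely the delicate technical step.

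A secondary difficulty is the self-referential shape of the formula: the exponents $\log(v+2u+y+1)$, $\log(v+u+y+1)$, $\log(u+y)$, and $\log(y)$ depend on the very variables being solved for. I would handle this by first fixing an exponent profile $(a_1,a_2,a_3,a_4)$ consistent with the widths of the five blocks in the base-$q$ expansion of $r$, then solving for $(u,v,x,y)$ with $\Delta^{-1}(u)$ a $Z_{n-1}$ instance satisfying those constraints, and finally verifying a posteriori that the chosen tuple reproduces the prescribed exponents. The at-most-five-parts condition would then be an immediate consequence of the decomposition, possibly with some of $v$, $x$, or $y$ collapsing to zero when $r$ happens to align exactly with a Zimin index.
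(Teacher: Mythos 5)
Your reading of the five-term sum as a base-$q$ positional decomposition of $r$ around an occurrence of $\phi(Z_{n-1})\,\phi(n)\,\phi(Z_{n-1})$ matches the paper's intent, but your proposal is missing the one idea the paper's (admittedly terse) proof actually rests on: Zimin's unavoidability theorem. The function $f(n,q)$ is the avoidance bound --- the length beyond which \emph{every} $q$-ary word encounters $Z_n$ (this is the quantity studied in the cited works of Cooper--Rorabaugh and Rytter--Shur, and it is finite precisely because $Z_n$ is unavoidable). Once $r \geq q^{f(n,q)}$, the base-$q$ digit string of $r$ is long enough that it is \emph{forced} to contain, as a contiguous subword, an exact instance $A\,B\,A$ with $A$ an instance of $Z_{n-1}$; setting $u=\Delta(A)$, $v=\Delta(B)$, and letting $x,y$ be the prefix and suffix digits, the displayed equation is just the positional expansion of $r$. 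There is nothing to construct and nothing to approximate. Your guess $f(n,q)=2^{n-1}-1$ is only the length of a $Z_{n-1}$ instance and is far too small: for $q=2$, $n=3$ it gives $f=3$, but $r=8=1000_2$ has a four-digit expansion that cannot contain any word of the form $ABA$ with $A$ a $Z_2$ instance (such a word has length at least $7$), so the theorem would be false with your $f$.

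The deeper problem is the greedy-extraction-plus-perturbation scheme. The block $u$ occurs twice, at two different high-order digit positions, so ``rounding'' an extracted block to the nearest admissible Zimin index changes $r$ by an amount supported on digit positions disjoint from those occupied by $x$ and $y$; the discrepancy cannot be absorbed there. And, as you observe yourself, only $q^{n-1}$ of the roughly $q^{2^{n-1}-1}$ candidate blocks are Zimin indices, so no density or spacing estimate on $\Psi_n$ will rescue the rounding step. The obstacle you label ``the delicate technical step'' is in fact the signal that the approach fails: the correct argument does not locate the instance by arithmetic on $r$ at all, but gets an exact occurrence for free from unavoidability and then merely transcribes it through $\Delta$.
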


\begin{proof}
    This is simply a restatement of the existence of Zimin words in the
    language of integer partitions, which uses the definition of the $\Delta$
    function to convert between words and numbers.
\end{proof}

\subsection{Connection To Prime Numbers}
We conjecture that there is a $q$-ary word $W$ of length $f(n, q)-1$ such that
$\Delta(W)$ is prime.
There are several reasons why this conjecture seems likely,
but for now we will state only this one small observation.

Consider the truth table $T_2(L_2^4) = 1110111111110111$.
We see that the fourth symbol (counting from $1$) is $0$.
If we consider ${T_2(L_2^4)}^k$ then we see that the symbol at index $k 2^4 + 3$
must always be $0$ when $k$ is a positive integer.
By Dirichlet's theorem this sequence contains an infinite number of primes,
since $2^4$ and $3$ are coprime.
Combined with Lemma~\ref{thm:zplus1} this is enough to see that there are an
infinite number of prime candidates for avoiding $Z_n$.

\medskip

\bibliographystyle{abbrv}
\bibliography{truth.table.bib}

\newpage

\begin{appendices}
    \section{Density Tables}
    The following graphics depict the density tables for a few small values of $n, m, q$.
    Note that some of the tables for $2 < q$ look asymmetric,
    but this is due to artifacts resulting from scaling the images.
    \begin{figure}[!ht]
      \centering
      \includegraphics[width=\textwidth]{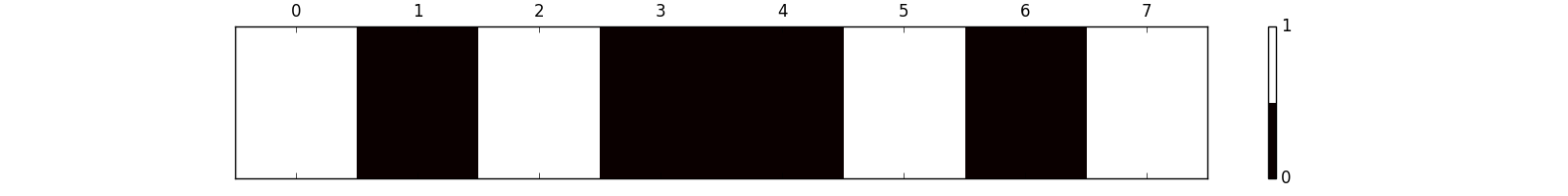}
      \caption{$n=2, m=3, q=2$}
    \end{figure}
    \begin{figure}[!ht]
      \centering
      \includegraphics[width=\textwidth]{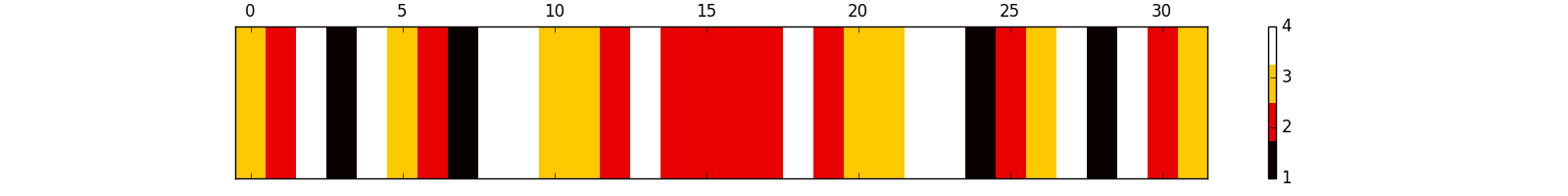}
      \caption{$n=2, m=5, q=2$}
    \end{figure}
    \begin{figure}[!ht]
      \centering
      \includegraphics[width=\textwidth]{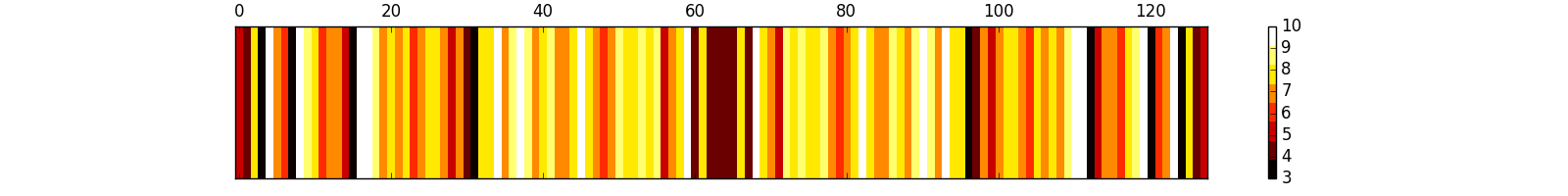}
      \caption{$n=2, m=7, q=2$}
    \end{figure}
    \begin{figure}[!ht]
      \centering
      \includegraphics[width=\textwidth]{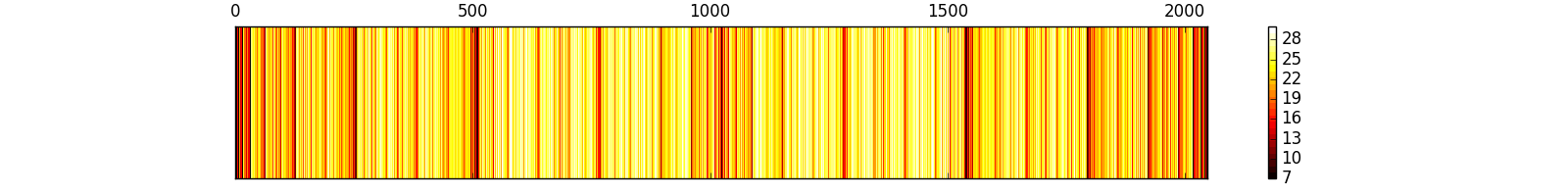}
      \caption{$n=2, m=11, q=2$}
    \end{figure}
    \begin{figure}[!ht]
      \centering
      \includegraphics[width=\textwidth]{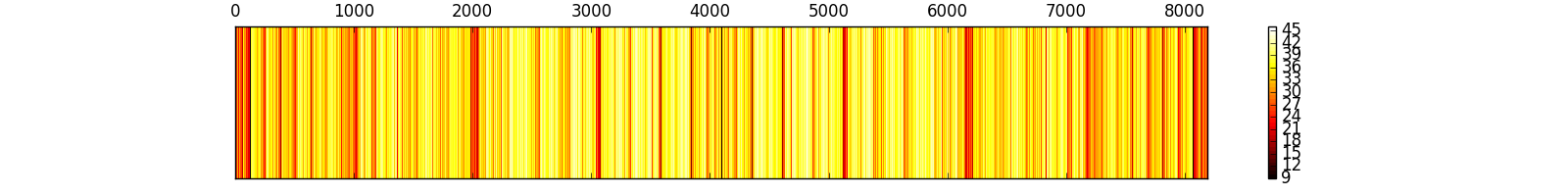}
      \caption{$n=2, m=13, q=2$}
    \end{figure}

    \clearpage

    \begin{figure}[!ht]
      \centering
      \includegraphics[width=\textwidth]{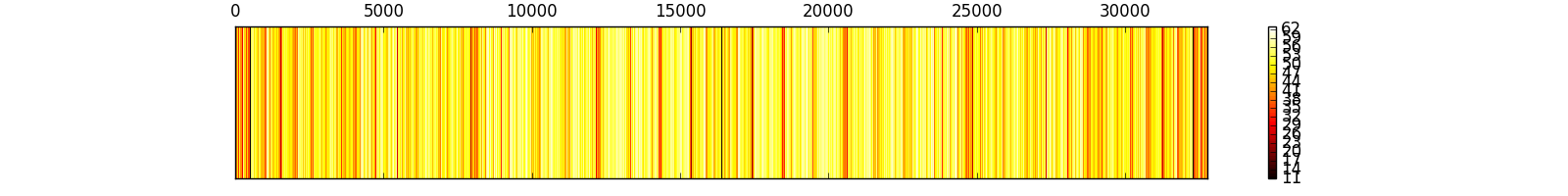}
      \caption{$n=2, m=15, q=2$}
    \end{figure}
    \begin{figure}[!ht]
      \centering
      \includegraphics[width=\textwidth]{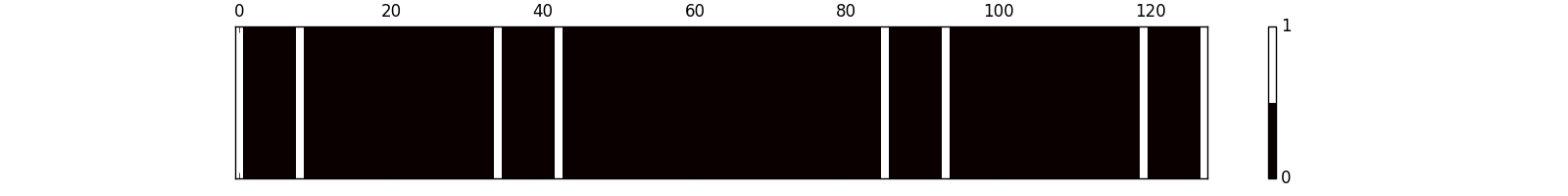}
      \caption{$n=3, m=7, q=2$}
    \end{figure}
    \begin{figure}[!ht]
      \centering
      \includegraphics[width=\textwidth]{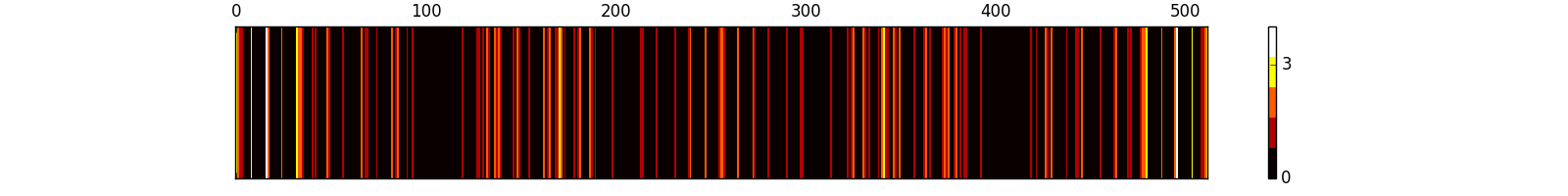}
      \caption{$n=3, m=9, q=2$}
    \end{figure}
    \begin{figure}[!ht]
      \centering
      \includegraphics[width=\textwidth]{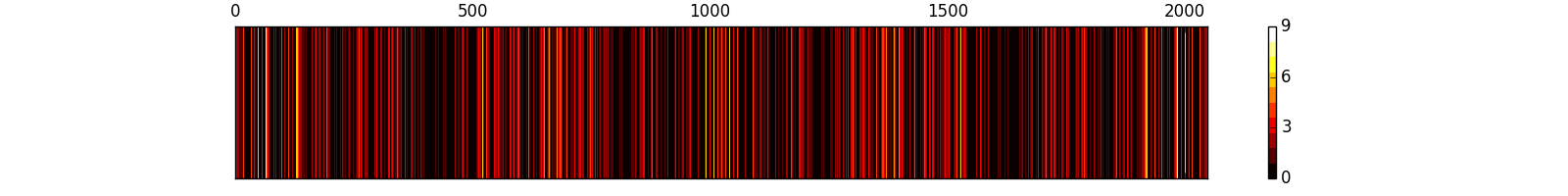}
      \caption{$n=3, m=11, q=2$}
    \end{figure}
    \begin{figure}[!ht]
      \centering
      \includegraphics[width=\textwidth]{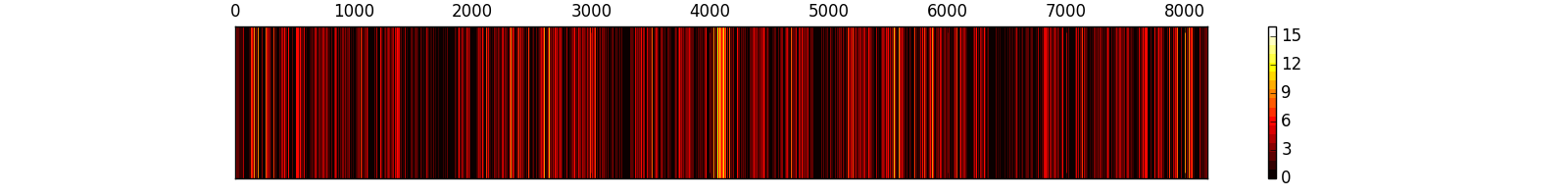}
      \caption{$n=3, m=13, q=2$}
    \end{figure}
    \begin{figure}[!ht]
      \centering
      \includegraphics[width=\textwidth]{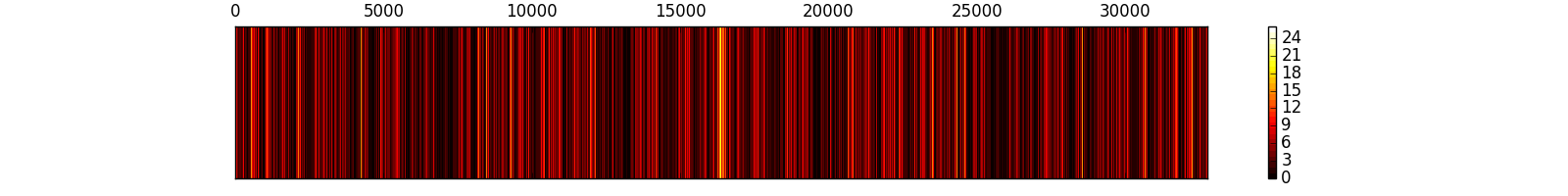}
      \caption{$n=3, m=15, q=2$}
    \end{figure}
    \begin{figure}[!ht]
      \centering
      \includegraphics[width=\textwidth]{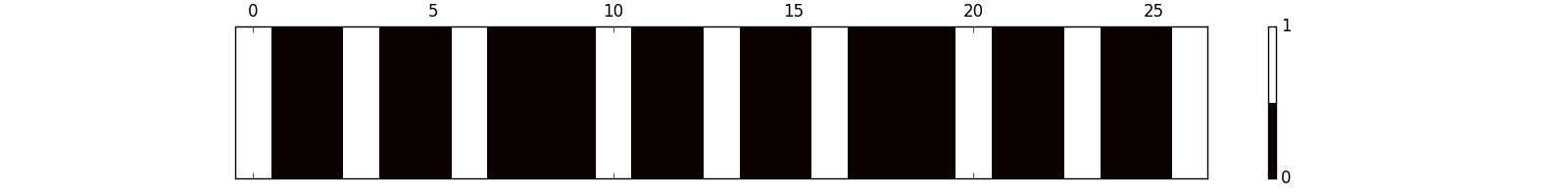}
      \caption{$n=2, m=3, q=3$}
    \end{figure}
    \begin{figure}[!ht]
      \centering
      \includegraphics[width=\textwidth]{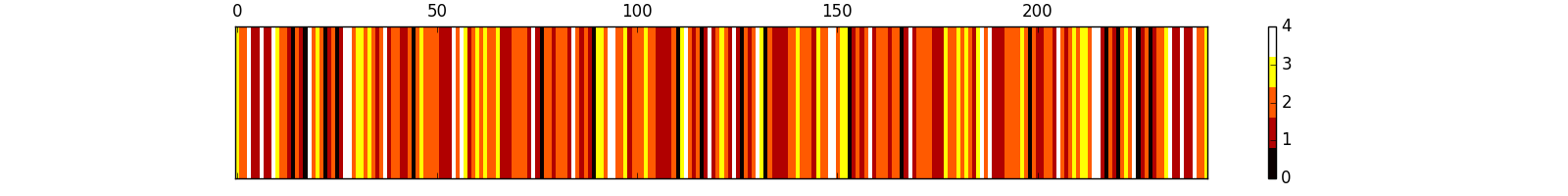}
      \caption{$n=2, m=5, q=3$}
    \end{figure}

    \clearpage

    \begin{figure}[!ht]
      \centering
      \includegraphics[width=\textwidth]{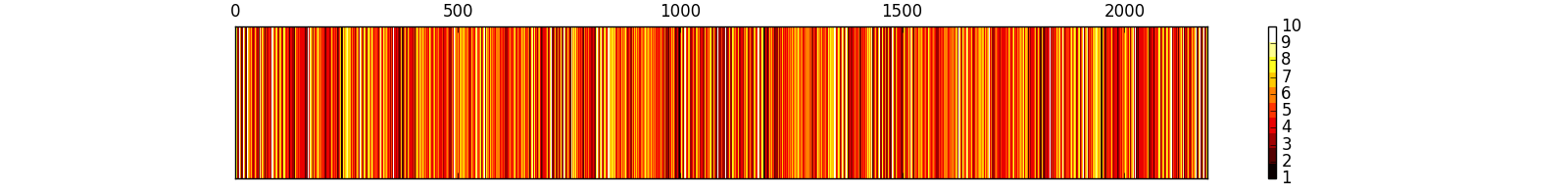}
      \caption{$n=2, m=7, q=3$}
    \end{figure}
    \begin{figure}[!ht]
      \centering
      \includegraphics[width=\textwidth]{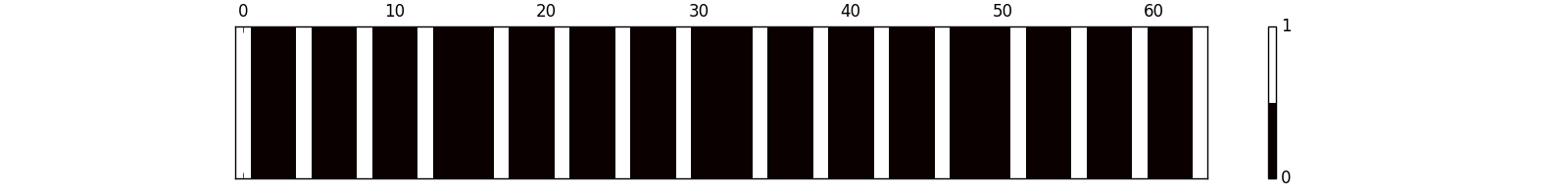}
      \caption{$n=2, m=3, q=4$}
    \end{figure}
    \begin{figure}[!ht]
      \centering
      \includegraphics[width=\textwidth]{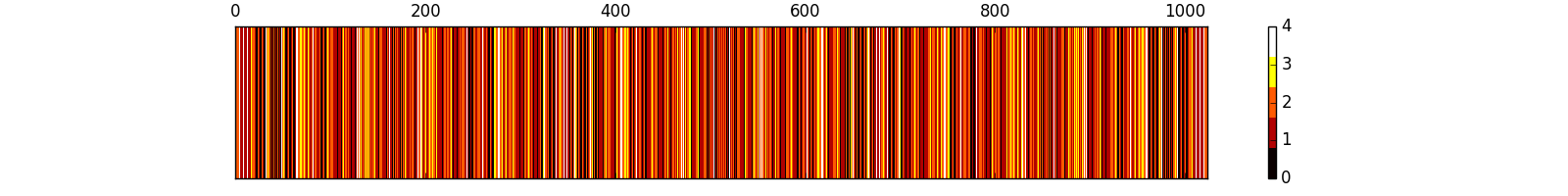}
      \caption{$n=2, m=5, q=4$}
    \end{figure}
    \begin{figure}[!ht]
      \centering
      \includegraphics[width=\textwidth]{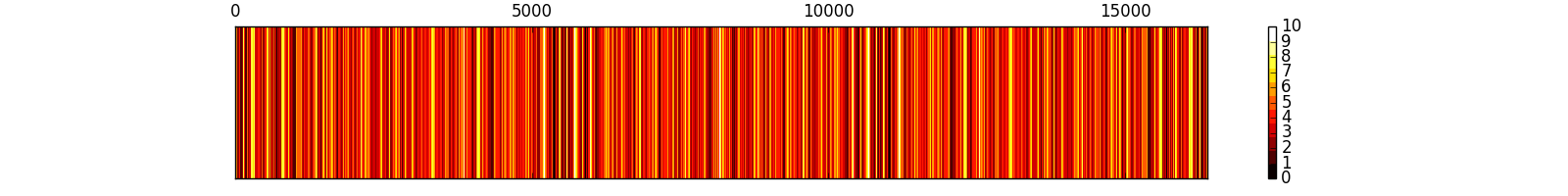}
      \caption{$n=2, m=7, q=4$}
    \end{figure}
    \begin{figure}[!ht]
      \centering
      \includegraphics[width=\textwidth]{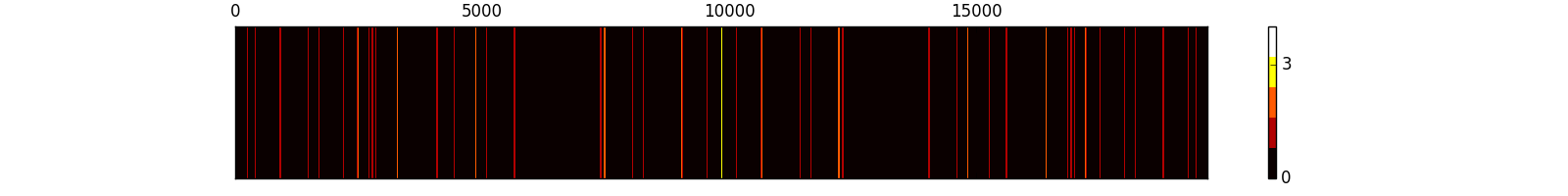}
      \caption{$n=3, m=9, q=3$}
    \end{figure}
    \begin{figure}[!ht]
      \centering
      \includegraphics[width=\textwidth]{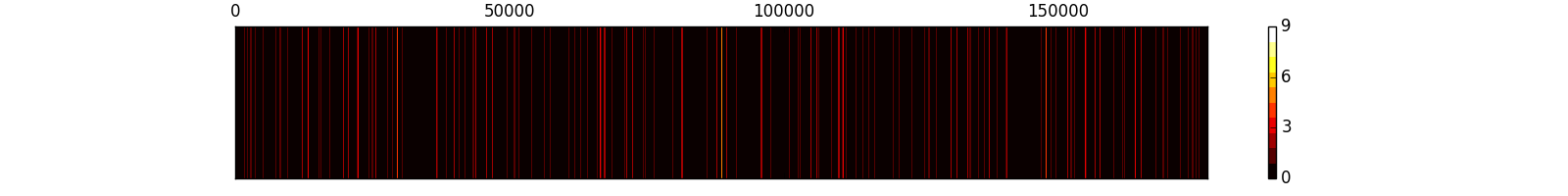}
      \caption{$n=3, m=11, q=3$}
    \end{figure}
\end{appendices}

\end{document}